\newtheorem{theorem}{Theorem}
\newtheorem*{theorem*}{Theorem}
\newtheorem{proposition}[theorem]{Proposition}
\newtheorem{lemma}[theorem]{Lemma}
\newtheorem{corollary}[theorem]{Corollary}
\theoremstyle{remark}\newtheorem*{case}{Case}
\theoremstyle{definition}
\theoremstyle{remark}
\newtheorem{remark}[theorem]{Remark}
\newcommand{\Lip}{\operatorname{Lip}_0}
\newcommand{\Lipp}{\operatorname{Lip}}
\newcommand{\Span}{\operatorname{span}}
\newcommand{\Id}{\operatorname{Id}}
\newcommand{\R}{\mathbb{R}}
\newcommand{\N}{\mathbb{N}}
\newcommand{\F}{\mathcal{F}}
\begin{document}
\title[On large $\ell_1$-sums of Lipschitz-free spaces and applications]{On large $\ell_1$-sums of Lipschitz-free spaces and applications}
\author[L. Candido]{Leandro Candido}
\address{Universidade Federal de S\~ao Paulo - UNIFESP. Instituto de Ci\^encia e Tecnologia. Departamento de Matem\'atica. S\~ao Jos\'e dos Campos - SP, Brasil}
\email{leandro.candido@unifesp.br}
\author[H. H. T. Guzm\'an]{H\'ector H. T. Guzm\'an}
\address{Universidade Federal de S\~ao Paulo - UNIFESP. Instituto de Ci\^encia e Tecnologia. Departamento de Matem\'atica. S\~ao Jos\'e dos Campos - SP, Brasil}
\email{torres.hector@unifesp.br}

\subjclass[2010]{46E15, 46B03 (primary), and 46B26 (secondary)}

\keywords{Lipschitz-free spaces, spaces of Lipschitz functions, spaces of continuous functions}

\begin{abstract}
We prove that the Lipschitz-free space over a Banach space $X$ of density $\kappa$, denoted by $\mathcal{F}(X)$, is linearly isomorphic to its $\ell_1$-sum $\left(\bigoplus_{\kappa}\mathcal{F}(X)\right)_{\ell_1}$. This provides an extension of a previous result from Kaufmann in the context of non-separable Banach spaces. Further, we obtain a complete classification of the spaces of real-valued  Lipschitz functions that vanish at $0$ over a $\mathcal{L}_p$-space. More precisely, we establish that, for every $1\leq p\leq \infty$, if $X$ is a $\mathcal{L}_p$-space of density $\kappa$, then $\mathrm{Lip}_0(X)$ 
is either isomorphic to $\mathrm{Lip}_0(\ell_p(\kappa))$ if $p<\infty$, or $\mathrm{Lip}_0(c_0(\kappa))$ if $p=\infty$.
\end{abstract}

\maketitle

\section{Introduction}
In this paper we are mainly interested in investigating the geometry of Lipschitz-free spaces over a Banach space $X$, $\F(X)$, and its 
topological dual, the space of Lipschitz functions that vanish at $0$, $\Lip(X)$. The geometry of these spaces always pose many intriguing questions and so far very little is known. For example, it is an open problem whether, for distinct integers $n$ and $m$ greater or equal to $2$, $\F(\R^n)$ is linearly isomorphic to $\F(\R^m)$ (thanks to Naor and Schechtman \cite{NaorSchet} we know that $\F(\R)$ is not isomorphic to $\F(\R^2)$). In the infinite-dimensional setting, in a recent paper \cite{ANPP} the authors have shown that $\F(\ell_p)$ is not linearly isomorphic to $\F(c_0)$ whenever $1<p<\infty$. However, it still not known whether $\F(\ell_1)$ is isomorphic to $\F(c_0)$. Similar questions in the context of Lipschitz functions spaces seems to be even more difficult to address as a plethora of mutually non-isomorphic Banach spaces may have isomorphic duals and, as far as we know, there are still no examples of infinite-dimensional Banach spaces $X$ and $Y$ of the same density such that $\Lip(X)$ is not isomorphic to $\Lip(Y)$. 

Ideas and techniques developed years ago by N. J. Kalton \cite{Kalton} have proven to be invaluable in this field, see for example \cite{AACD}, \cite{ANPP}, \cite{CCD}, \cite{Kauf}. By applying some of Kalton's results \cite[\S 4]{Kalton}, P. L. Kauffman \cite{Kauf} established the striking fact that, for all Banach spaces $X$, $\F(X)$ is linearly isomorphic to its $\ell_1$-sum. In symbols,
\[\F(X)\sim \left(\bigoplus_{\N}\F(X)\right)_{\ell_1}.\]
Furthermore, for all Banach spaces $X$, $\F(X)$ is linearly isomorphic to $\F(B_X)$, where $B_X$ denotes the unit ball of $X$. These theorems are now part of the basic toolkit in any study of this kind and we refer to \cite{AACD} for a deep investigation on this topic.

Our contribution to this field in the present paper can be divided into two interrelated parts. In the first part we obtain a generalization of the aforementioned Kaufmann's $\ell_1$-sum theorem in the specific context of non-separable Banach spaces. Specifically, our first main result can be stated as follows.

\begin{theorem}\label{ell_1Sum}
If $X$ is a Banach space of density $\kappa$, then
\[\F(X)\sim \left(\bigoplus_{\kappa}\F(X)\right)_{\ell_1}.\]
\end{theorem}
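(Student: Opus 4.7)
The plan is to apply Pelczynski's decomposition method. Set $A := \F(X)$ and $B := \left(\bigoplus_{\kappa} \F(X)\right)_{\ell_1}$. Kaufmann's theorem yields $A \sim A \oplus A$, while $B \sim B \oplus B$ holds trivially since $\kappa + \kappa = \kappa$; moreover $A$ is obviously complemented in $B$ as the first coordinate. As soon as $B$ is shown to be isomorphic to a complemented subspace of $A$ as well, writing $A = B \oplus C$ gives $A \oplus B = B \oplus B \oplus C = B \oplus C = A$, and writing $B = A \oplus D$ gives $A \oplus B = A \oplus A \oplus D = A \oplus D = B$, so $A \sim A \oplus B \sim B$. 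All substantive work therefore lies in this single embedding step.

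To this end I first produce a well-separated family $(e_\alpha)_{\alpha < \kappa} \subseteq S_X$ with $\|e_\alpha - e_\beta\| > 1/2$ for $\alpha \neq \beta$ by transfinite recursion on Riesz's almost-orthogonality lemma: given $(e_\beta)_{\beta < \alpha}$, the closed span $Y_\alpha := \overline{\Span}\{e_\beta : \beta < \alpha\}$ has density at most $|\alpha| < \kappa = \Dens(X)$, hence is a proper subspace of $X$, and Riesz produces $e_\alpha \in S_X$ with $\operatorname{dist}(e_\alpha, Y_\alpha) > 1/2$. Fix $r := 1/16$ and set $B_\alpha := e_\alpha + rB_X$; the balls are pairwise disjoint, separated by at least $1/2 - 2r > 0$, and stay at distance at least $1-r$ from the origin. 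Consider the pointed metric subspace
\[W := \{0\} \cup \bigcup_{\alpha < \kappa} B_\alpha \subseteq X.\]

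The first of two sub-goals is $\F(W) \sim \left(\bigoplus_{\alpha < \kappa} \F(B_\alpha \cup \{0\}, 0)\right)_{\ell_1}$, which extends to uncountable index sets the standard $\ell_1$-decomposition of Lipschitz-free spaces over pointed unions of well-separated bounded pieces. The bi-Lipschitz equivalence between the $X$-metric on $W$ and the wedge-sum metric $d_W(x,y) = \|x\|+\|y\|$ (for $x, y$ in distinct pieces) enables splitting Lipschitz functions vanishing at $0$ into their $\alpha$-components by McShane extensions cut off by Lipschitz bumps supported near each $B_\alpha$; the disjointness of these cutoff supports supplies the $\ell_1$-norm on the dual side. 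Each summand is in turn isomorphic to $\F(X)$ uniformly in $\alpha$: after rescaling by $1/r$, the pointed metric space $(B_\alpha \cup \{0\}, 0)$ is bi-Lipschitz equivalent to the wedge (with a change of basepoint) of a two-point space with the unit ball $B_X$, whose free space is $\R \oplus_1 \F(B_X) \sim \F(X)$ by Kaufmann.

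The second and more delicate sub-goal is to realize $\F(W)$ as a complemented subspace of $\F(X)$. Since $W$ is disconnected, there is no Lipschitz retraction $X \to W$; instead I construct a Lipschitz map $p \colon X \to \F(W)$ extending the canonical embedding $w \mapsto \delta_w - \delta_0$, whose linearization by the universal property of $\F$ is the desired bounded projection $P \colon \F(X) \to \F(W)$. With $\rho_\alpha \colon X \to [0,1]$ the Lipschitz cutoff equal to $1$ on $B_\alpha$ and $0$ outside $e_\alpha + 2rB_X$ (Lipschitz constant $1/r$), and $\pi_\alpha \colon X \to B_\alpha$ the $1$-Lipschitz nearest-point projection onto the convex set $B_\alpha$, define
\[p(x) := \sum_{\alpha < \kappa} \rho_\alpha(x) \bigl(\delta_{\pi_\alpha(x)} - \delta_0 \bigr),\]
a sum containing at most one nonzero term at each $x$ since the $\rho_\alpha$ have pairwise disjoint supports. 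The main technical step, and the one I expect to require the most care, is verifying that $p$ is Lipschitz with constant depending only on $r$; this will proceed by a case analysis according to whether two points $x, y$ lie in the same enlarged ball, in two different enlarged balls, or outside all of them, the crucial inequality in the boundary case being $\rho_\alpha(x) \leq (2r - \|x - e_\alpha\|)/r \leq \|x-y\|/r$ when $y$ lies outside the enlargement, which controls the ``jump'' from $\delta_{\pi_\alpha(x)}$ to $0$. Once $p$, and hence $P$, is in hand, $B \sim \F(W)$ is complemented in $A = \F(X)$, and the Pelczynski computation above completes the proof.
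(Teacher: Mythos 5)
Your proposal is essentially the paper's argument, recast predually. The paper likewise builds a $1/2$-separated family in $S_X$ by transfinite use of the Riesz lemma, takes the union $N$ of small balls around the (rescaled) centers together with the origin, identifies $\F(N)$ with the $\ell_1$-sum of the free spaces of the pieces, and obtains complementation of $\F(N)$ in $\F(B_X)\sim\F(X)$ by the very same cutoff-times-retraction formula you propose: Lemma \ref{Extension} defines $E(f)(x)=\gamma_a(x)f(h_a(x))$, which is exactly the dual of your map $p(x)=\sum_\alpha\rho_\alpha(x)\bigl(\delta_{\pi_\alpha(x)}-\delta_0\bigr)$, and the proof finishes by Pe{\l}czy\'nski decomposition just as you do. Your predual phrasing (linearize a Lipschitz map $X\to\F(W)$ via the universal property of the free space) is a legitimate variant and even spares you the verification, needed in the paper, that the extension operator is pointwise-pointwise continuous on bounded sets so that complementation of $\Lip(N)$ in $\Lip(B_X)$ passes to the preduals.

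Two points need repair, both local. First, in a general Banach space there is no ``$1$-Lipschitz nearest-point projection'' onto a ball: metric projections onto balls need not be single-valued (already in $\ell_\infty^2$) nor nonexpansive, so $\pi_\alpha$ as you describe it does not exist. Replace it by the radial retraction onto $B_\alpha$, which is $2$-Lipschitz (Proposition \ref{RadialRetraction}); your case analysis then goes through with a slightly worse constant, exactly as in Lemma \ref{Extension}. Second, you assert $\R\oplus_1\F(B_X)\sim\F(X)$ ``by Kaufmann'', but Kaufmann only gives $\F(B_X)\sim\F(X)$; the absorption of the $\R$ factors (equivalently, of an $\ell_1(\kappa)$ summand after taking the $\ell_1$-sum) is true but requires an argument. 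The paper handles it by citing \cite[Proposition 3]{HajNov}, i.e.\ $\F(B_X)\oplus_1\ell_1(\kappa)\sim\F(B_X)$; alternatively, note that $\F(\R)\cong L_1$ is complemented in $\F(X)$ (a norm-one linear projection of $X$ onto a line is a Lipschitz retraction) and that $L_1\oplus\R\sim L_1$, which yields $\F(X)\oplus\R\sim\F(X)$ uniformly over the summands. With these two repairs the proposal is correct and coincides in substance with the paper's proof.
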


In the second part we turn our attention to the $\Lip(X)$ spaces. Our second main result consists of a non-separable version  \cite[Lemma 1.3]{CCD}.

\begin{theorem}\label{UltraLIP} Let $M$ be a pointed metric space with an origin $0$. Let $\varGamma$ be an infinite collection of subsets of $M$ directed by upward inclusion, each of it containing $0$ and such that $\bigcup_{N\in \varGamma}N$ is dense in $M$. Then 
\[\Lip(M) \stackrel{c}{\hookrightarrow} \left(\bigoplus_{N\in \varGamma} \Lip(N)\right)_{\ell_{\infty}}.\]
\end{theorem}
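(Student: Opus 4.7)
The plan is to produce an explicit isometric embedding together with a contractive left inverse. Define the restriction operator
\[
T\colon \Lip(M)\longrightarrow \Bigl(\bigoplus_{N\in\varGamma}\Lip(N)\Bigr)_{\ell_\infty}, \qquad Tf=(f|_N)_{N\in\varGamma}.
\]
Each restriction satisfies $\|f|_N\|_{\Lip(N)}\leq \|f\|_{\Lip(M)}$, so $T$ is well defined and contractive. To see that $T$ is in fact an isometry, I would use that $\bigcup_{N\in\varGamma}N$ is dense in $M$: given $x,y\in M$ and $\varepsilon>0$, pick points $x',y'$ in this union close to $x,y$; since Lipschitz functions are continuous the incremental quotient $|f(x)-f(y)|/d(x,y)$ is approximated by $|f(x')-f(y')|/d(x',y')$. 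Using that $\varGamma$ is upward directed one can find a single $N\in\varGamma$ containing both $x'$ and $y'$, whence $\sup_{N}\|f|_N\|_{\Lip(N)}\geq \|f\|_{\Lip(M)}-\varepsilon$.

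For the left inverse, fix an ultrafilter $\mathcal{U}$ on $\varGamma$ refining the tail filter, i.e., containing $\{N'\in\varGamma:N'\supseteq N\}$ for every $N\in\varGamma$; such $\mathcal{U}$ exists because the tail sets form a filter base by directedness of $\varGamma$. Given $g=(g_N)_{N\in\varGamma}$ in the $\ell_\infty$-sum and a point $x\in\bigcup_{N\in\varGamma}N$, the scalar $g_N(x)$ is defined for all $N$ in a tail, and since $g_N(0)=0$ we have $|g_N(x)|\leq \|g_N\|_{\Lip(N)}d(x,0)\leq \|g\|_\infty d(x,0)$. Thus one can define
\[
(Pg)(x)=\lim_{N,\mathcal{U}} g_N(x),\qquad x\in \bigcup_{N\in\varGamma}N.
\]
A routine check shows this is linear in $g$, vanishes at $0$, and is Lipschitz on $\bigcup_{N\in\varGamma}N$ with constant at most $\|g\|_\infty$. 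By density I would then extend $Pg$ uniquely (by uniform continuity) to an element of $\Lip(M)$ of the same norm, obtaining a contractive linear map $P\colon \bigl(\bigoplus_{N\in\varGamma}\Lip(N)\bigr)_{\ell_\infty}\to \Lip(M)$.

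Finally, I would verify $P\circ T=\Id_{\Lip(M)}$: for $f\in\Lip(M)$ and $x\in\bigcup_{N}N$ the net $(Tf)_N(x)=f(x)$ is eventually constant, so $(PTf)(x)=f(x)$, and the two continuous extensions to $M$ agree. Consequently $T(\Lip(M))$ is the image of the bounded projection $T\circ P$, which yields the desired complemented isometric embedding.

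The only genuinely delicate point is the construction of $P$: one has to be careful to normalize at $0$ so that the ultrafilter limit is bounded pointwise on $\bigcup_N N$, and to appeal to density (and uniform continuity of Lipschitz functions) in order to pass from the dense union to the whole of $M$ without inflating the norm. Once these are arranged the rest of the argument is essentially formal.
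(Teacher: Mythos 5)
Your proposal is correct and follows essentially the same route as the paper: the restriction operator $T$, a left inverse built from pointwise ultralimits along an ultrafilter refining the tail filter of the directed family (with boundedness via $g_N(0)=0$ and the Lipschitz bound via directedness), extension to $M$ by density, and the projection $T\circ P$. The only cosmetic difference is that the paper extends each $g_N$ by zero before taking ultralimits and extracts auxiliary sequences to compute them, whereas you work with the ultralimit inequalities directly on tail sets, which is equivalent.
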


As an application of the previous theorems, we extend \cite[Theorem 3.3]{CCD} by giving a complete classification of the Banach spaces 
$\Lip(X)$ where $X$ denotes an $\mathcal{L}_p$-space, see \cite[Definition II.5.2]{LinTza}. 

\begin{theorem}\label{Lpspaces}
For some $1\leq p \leq \infty$, let $X$ be a $\mathcal{L}_p$-space of density $\kappa$. Then $\Lip(X)$ is linearly isomorphic to either 
$\Lip(\ell_p(\kappa))$, if $p<\infty$, or  $\Lip(c_0(\kappa))$, if $p=\infty$.
\end{theorem}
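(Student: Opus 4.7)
The plan is to apply Pe{\l}czy\'{n}ski's decomposition method to $\Lip(X)$ and $\Lip(Y)$, where $Y:=\ell_{p}(\kappa)$ if $p<\infty$ and $Y:=c_{0}(\kappa)$ if $p=\infty$. The required self-reproducing property is immediate from Theorem~\ref{ell_1Sum}: applying it to both $X$ and $Y$ and dualizing (an $\ell_{1}$-sum has the $\ell_{\infty}$-sum of duals as its dual) yields
\[
\Lip(X)\sim\Bigl(\bigoplus_{\kappa}\Lip(X)\Bigr)_{\ell_{\infty}},\qquad \Lip(Y)\sim\Bigl(\bigoplus_{\kappa}\Lip(Y)\Bigr)_{\ell_{\infty}}.
\]
Hence it suffices to establish the two complemented embeddings $\Lip(X)\stackrel{c}{\hookrightarrow}\Lip(Y)$ and $\Lip(Y)\stackrel{c}{\hookrightarrow}\Lip(X)$.

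The first embedding is built from Theorem~\ref{UltraLIP}. Fix a constant $\lambda$ such that every finite-dimensional subspace of $X$ is contained in a further finite-dimensional subspace that is $\lambda$-isomorphic to some $\ell_{p}^{n}$; this is available by definition of an $\mathcal{L}_{p}$-space. Using the density of $X$, build a directed family $\varGamma$ of such subspaces of $X$ with $|\varGamma|=\kappa$ and $\bigcup_{N\in\varGamma}N$ dense in $X$. Theorem~\ref{UltraLIP} then gives $\Lip(X)\stackrel{c}{\hookrightarrow}\bigl(\bigoplus_{N\in\varGamma}\Lip(N)\bigr)_{\ell_{\infty}}$. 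Since each $\Lip(N)$ is uniformly isomorphic to $\Lip(\ell_{p}^{\dim N})$, and the latter is complemented in $\Lip(Y)$ via the canonical linear coordinate retraction $Y\to\ell_{p}^{\dim N}$, assembling these individual projections and invoking the self-reproducing identity for $Y$ yields $\Lip(X)\stackrel{c}{\hookrightarrow}\bigl(\bigoplus_{\kappa}\Lip(Y)\bigr)_{\ell_{\infty}}\sim\Lip(Y)$.

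The reverse embedding $\Lip(Y)\stackrel{c}{\hookrightarrow}\Lip(X)$ is the main obstacle. Applying Theorem~\ref{UltraLIP} to $Y$ with its canonical finite coordinate subspaces produces $\Lip(Y)\stackrel{c}{\hookrightarrow}\bigl(\bigoplus_{\kappa}\Lip(\ell_{p}^{n})\bigr)_{\ell_{\infty}}$, and so it remains to embed this $\ell_{\infty}$-sum complementedly into $\Lip(X)$. By duality this amounts to exhibiting inside $\F(X)$ a complemented $\ell_{1}$-sum of Lipschitz-free spaces $\F(F_{\alpha})$, indexed by $\alpha<\kappa$, with each $F_{\alpha}$ a subspace of $X$ uniformly isomorphic to some $\ell_{p}^{n}$. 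My plan is to build such a family by transfinite recursion along a well-ordered dense subset of $X$: at each step the $\mathcal{L}_{p}$-hypothesis furnishes a fresh $\ell_{p}^{n}$-block together with a compatible local linear projection onto it, and the technical heart of the proof is to arrange these local projections so that they assemble into a single bounded projection onto the full $\ell_{1}$-sum in $\F(X)$. This uniform control is where the case split $p<\infty$ versus $p=\infty$ really matters: in the $c_{0}(\kappa)$ case the coordinate-style retractions in $X$ onto its $\ell_{\infty}^{n}$-blocks require extra care to avoid accumulation of constants, whereas for $p<\infty$ the uniform $\mathcal{L}_{p}$-bound propagates through direct-sum projections more cleanly. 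Once both embeddings are in hand, Pe{\l}czy\'{n}ski's decomposition fed by the $\ell_{\infty}$-self-reproducing property above delivers $\Lip(X)\sim\Lip(Y)$.
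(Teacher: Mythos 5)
Your first half is fine and matches the paper: the self-reproducing identity $\Lip(X)\sim\bigl(\bigoplus_{\kappa}\Lip(X)\bigr)_{\ell_{\infty}}$ from dualizing Theorem~\ref{ell_1Sum}, and the embedding $\Lip(X)\stackrel{c}{\hookrightarrow}\bigl(\bigoplus_{N\in\varGamma}\Lip(N)\bigr)_{\ell_{\infty}}$ via Theorem~\ref{UltraLIP} applied to a directed, dense family of finite-dimensional subspaces uniformly isomorphic to $\ell_p^n$ (this is exactly Theorem~\ref{UniformlyComplementedSequence} and Corollary~\ref{Lpcomplementation}). The genuine gap is your second embedding. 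You only offer a ``plan'' to produce $\Lip(\ell_p(\kappa))\stackrel{c}{\hookrightarrow}\Lip(X)$ by building, inside $\F(X)$, a complemented $\ell_1$-sum of free spaces $\F(F_\alpha)$ over $\ell_p^n$-blocks with the local linear projections assembling into one bounded projection; you yourself flag this assembly as ``the technical heart'' and do not carry it out. Nothing in the available machinery supplies it: Lemma~\ref{Extension} produces extension operators for unions of \emph{disjoint balls} (metrically separated sets), not for a transfinite family of linear blocks, and linear projections onto different $\ell_p^n$-blocks of an $\mathcal{L}_p$-space need not be compatible in any way that yields a single bounded projection onto an $\ell_1$-sum in $\F(X)$. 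As written, the proof is incomplete at precisely the step that carries all the difficulty.

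The paper's proof shows this hard direction is unnecessary. Instead of comparing $\Lip(X)$ with $\Lip(\ell_p(\kappa))$ directly, one compares each of them with the common model $\bigl(\bigoplus_{\kappa}\Lip(\ell_p)\bigr)_{\ell_{\infty}}$, where $\ell_p$ is the \emph{separable} space. One direction is your Corollary~\ref{Lpcomplementation}-type embedding. For the other, it suffices to know the separable fact $\Lip(\ell_p)\stackrel{c}{\hookrightarrow}\Lip(X)$ (obtained as in \cite[Theorem 3.3]{CCD}, using uniformly complemented $\ell_p^n$'s in $X$); then
\[\Bigl(\bigoplus_{\kappa}\Lip(\ell_p)\Bigr)_{\ell_{\infty}}\stackrel{c}{\hookrightarrow}\Bigl(\bigoplus_{\kappa}\Lip(X)\Bigr)_{\ell_{\infty}}\sim\Lip(X),\]
the last isomorphism being the dual of Theorem~\ref{ell_1Sum}, so the $\kappa$-fold sum is absorbed by $\Lip(X)$ itself. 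Pe{\l}czy\'nski's method then gives $\Lip(X)\sim\bigl(\bigoplus_{\kappa}\Lip(\ell_p)\bigr)_{\ell_{\infty}}$, and running the identical argument with $X$ replaced by $\ell_p(\kappa)$ (resp.\ $c_0(\kappa)$ when $p=\infty$) identifies $\Lip(\ell_p(\kappa))$ with the same model space, which finishes the proof. If you reorganize your argument this way, everything you need is already in your first two paragraphs plus the separable complemented embedding, and the transfinite construction you were worried about can be discarded.
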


Theorem \ref{Lpspaces} provides a complete classification of the Banach spaces $\Lip(C(K))$, where $K$ stands for an infinite Hausdorff compact space, since is a well-known fact that every $C(K)$ is a $\mathcal{L}_\infty$-space, see \cite[p. 198-199]{LinTza}. The next corollary extends the main result of \cite{CanKau} and also the main result of \cite{DutFec} in the dual setting.

\begin{corollary}\label{LipC(K)}
If $K$ is an infinite Hausdorff compact space of weight $\kappa$, then $\Lip(C(K))$ is linearly isomorphic to $\Lip(c_0(\kappa))$.
\end{corollary}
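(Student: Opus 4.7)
The plan is to apply Theorem~\ref{Lpspaces} with $p=\infty$ directly to $X=C(K)$, so the only work is to verify the two hypotheses of that theorem: that $C(K)$ is an $\mathcal{L}_\infty$-space and that its density coincides with the weight of $K$.

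First, I would invoke the classical fact \cite[p.~198--199]{LinTza} (already quoted in the paragraph preceding the corollary) that every space of continuous functions on a compact Hausdorff space is an $\mathcal{L}_\infty$-space. Second, I would verify the standard identity $\mathrm{dens}\,C(K)=w(K)=\kappa$ for infinite $K$: the inequality $\mathrm{dens}\,C(K)\le w(K)$ follows by embedding $K\hookrightarrow[0,1]^\kappa$ and taking the dense $\mathbb{Q}$-subalgebra of $C(K)$ generated by the (restrictions of) coordinate projections, constants and their finite products, which has cardinality $\kappa$; the reverse inequality $w(K)\le\mathrm{dens}\,C(K)$ comes from the observation that a dense family $(f_\alpha)_{\alpha<\kappa}\subset C(K)$ separates points and hence the sets of the form $f_\alpha^{-1}(I)$ with $I$ a rational open interval form a subbase of the topology of $K$.

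Once these two facts are in place, $C(K)$ is an $\mathcal{L}_\infty$-space of density $\kappa$, so Theorem~\ref{Lpspaces} with $p=\infty$ gives
\[
\Lip(C(K))\sim \Lip(c_0(\kappa)),
\]
which is exactly the statement of the corollary.

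There is essentially no obstacle here beyond recording the two textbook facts about $C(K)$; all of the analytic content has already been absorbed into Theorem~\ref{Lpspaces} (and, behind it, into Theorems~\ref{ell_1Sum} and~\ref{UltraLIP}). If anything, the only minor point requiring attention is the case when $K$ is metrizable (i.e.\ $\kappa=\aleph_0$), where one should make sure that the separable case of Theorem~\ref{Lpspaces} is covered; this is indeed handled by \cite[Theorem~3.3]{CCD}, which the authors explicitly extend, so no separate argument is needed.
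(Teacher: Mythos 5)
Your proposal is correct and is exactly the paper's route: the corollary is deduced from Theorem \ref{Lpspaces} with $p=\infty$ using the cited fact that every $C(K)$ is an $\mathcal{L}_\infty$-space, with the identity $\Dens C(K)=w(K)$ (which you spell out in more detail than the paper) supplying the density hypothesis.
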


The paper is organized as follows. In Section \ref{Sec-Term} we set the main terminology to be used in this paper. In Section \ref{Sec-Aux} we present some necessary basic propositions. In Section \ref{Sec-ell1} we establish our first main result, Theorem \ref{ell_1Sum}. Finally, in Section \ref{Sec-ellinfty} we prove Theorems \ref{UltraLIP} and \ref{Lpspaces}.


\section{Terminology and Preliminaries}
\label{Sec-Term}
Given metric spaces $(M,d)$ and $(N,e)$, for every function $f:M\to N$ we may consider its Lipschitz number 
\[L(f) = \displaystyle \sup_{x\neq y \in M} \displaystyle \frac{e(f(x),f(y))}{d(x,y)}.\]
We say that  $f$ is a Lipschitz function if $L(f)<\infty$ and in this case $L(f)$ is the smallest of all numbers $\lambda \geq 0$ satisfying, for all $x,y \in M$, the formula 
\[e(f(x),f(y)) \leq \lambda d(x,y).\] 

If $M$ is a pointed metric space, that is, a metric space with a distinguished point $0$ called origin, then $\Lip(M)$ denotes 
the Banach space of all Lipschitz functions $f:M\to \R$ that vanish at $0$, endowed with the norm $\|f\|_{\Lipp}=L(f)$.

The Banach space $\Lip(M)$ is a dual space. Its canonical predual, called Lipschitz-free space or Arens-Eells space and denoted $\F(M)$,  
can be obtained as closed linear span of all the evaluation functionals in $\Lip(M)^*$. We refer the reader to \cite{God} and \cite{Weaver} for more details about these spaces.

If $K$ is a compact Hausdorff space, $C(K)$ stands for the Banach space of all continuous functions $f:K\to \R$, equipped with the norm \[\|f\|_{\infty}=\sup_{k\in K}|f(k)|.\]
We refer to Semadeni's book \cite{Se} for a complete survey on $C(K)$ spaces. 

Banach spaces $X$ and $Y$ are said to be isomorphic if there exists a bijective bounded linear operator $T:X\to Y$. In this case, the number $\|T\|\|T^{-1}\|$ will be called distortion of $T$. We will often write $X\sim Y$ to indicate that there is an isomorphism $T:X\to Y$ and $X\stackrel{\lambda}{\sim} Y$ to emphasize that $\|T\|\|T^{-1}\|\leq \lambda$. We will write $Y\stackrel{c}{\hookrightarrow} X$ to indicate that $Y$ is isomorphic to a complemented subspace of $X$. This means that there are bounded linear operators $T:X\to Y$ and $S:Y\to X$ so that $S\circ T=\Id_X$
is the identity operator. 

Given $1 \leq  p \leq \infty$, a Banach space $X$ is said to be a $\mathcal L_{p}$-space (or a $\mathcal L_{p,\lambda}$-space) if there is $\lambda\geq 1$ such that every finite-dimensional subspace $Y$ of $X$ is contained in a subspace $Z$ of $X$ with $Z\stackrel{\lambda}{\sim} \ell_p^n$ for some $n \in \N$, see \cite[Definition II.5.2]{LinTza}.

For a pointed metric space $M$ and a subset $N\subset M$ containing the origin $0$, we denote by 
$\mathrm{Ext}(N,M)$ the set of all extension operators $E:\Lip(N)\to \Lip(M)$, that is, $E$ is a bounded linear map such that 
$E(f)\restriction_N=f$ for each $f\in \Lip(N)$. We denote by $\mathrm{Ext}^{pt}(N,M)$ the subset of $\mathrm{Ext}(N,M)$
comprising all the extension operators which are pointwise-pointwise-continuous on the bounded subsets of $\Lip(N)$.

Whenever $\mathrm{Ext}(N,M)\neq \emptyset$, then $\Lip(N)$ is isomorphic to a complemented subspace of  $\Lip(M)$. Indeed, if $R:\Lip(M)\to \Lip(N)$ is the natural restriction $R(g)=g\restriction_N$, then it is immediate that $R\circ E=\Id_{\Lip(N)}$ is the identity operator. If in particular $E\in \mathrm{Ext}^{pt}(N,M)$, then $E$ is a dual operator since it is weak$^*$-weak$^*$-continuous, see \cite[Theorem 3.3]{Weaver}. In this case we obtain the stronger conclusion that $\F(N)$ is isomorphic to a complemented subspace of $\F(M)$.

For a Banach space $X$ we denote by $B[x,r]=x+rB_X$ ($S[x,r]=x+rS_X$) the closed ball (sphere) with center $x$ and radius $r$. A collection $\mathcal{A} \subset X$ will be called a packing (or $r$-packing) in the unit ball $B_X$, if there is $r>0$ so that all the members of the family $\{B[a,r]:a \in \mathcal{A}\}$ are contained in $B_X$ and have pairwise disjoint interiors. 

The concept of convergence with respect to ultrafilters will also play an important role in this research. Given an ultrafilter $\mathcal{U}$, we say that an indexed family of real numbers $(a_i)_{i\in \varGamma}$ converges to an ultralimit $a\in\R$ with respect to $\mathcal{U}$, and we write $\lim_{\mathcal{U}}a_i=a$, if $\{i\in \varGamma: |a_i-a|<\epsilon\}\in \mathcal{U}$ for every $\epsilon>0$. It is important to observe that if $(a_i)_{j\in \varGamma}$ is bounded family in $\R$, then a ultralimit $\lim_{\mathcal{U}}a_i$ exists in $\R$ for every ultrafilter $\mathcal{U}$ on $\varGamma$, see \cite[Theorem 3.52]{HindStrauss}. We refer the reader to \cite[\S 3.5]{HindStrauss} for a detailed presentation on ultralimits and its properties.

\begin{remark}\label{Ultralimits}If $\varGamma$ is a directed set, any free ultrafilter containing all the members of the collection $\mathcal{G}=\{G_j:j\in \varGamma\}$, where $G_j=\{i\geq j:i \in \varGamma\}$ for each $j\in \varGamma$, will be called directed ultrafilter. Whenever $\varGamma$ is infinite, the collection $\mathcal{G}\cup\{\varGamma\setminus\{j\}:j\in \varGamma\}$
has the finite intersection property and therefore, by the Kuratowski--Zorn Lemma, it is contained in an ultrafilter $\mathcal{U}$ which, in turn, will be directed. If  $\mathcal{U}$ is a directed ultrafilter, then for every bounded net $(a_i)_{i\in \varGamma}$ and every $j\in \varGamma$, there is a sequence $(i_n)_{n\in \N}$ in $\varGamma$ such that $j<i_1<i_2<i_3<\ldots$ and \[\lim_{n\to\infty}a_{i_n}=\lim_{\mathcal{U}}a_i.\]
Indeed, since $(a_i)_{i\in \varGamma}$ is bounded $a=\lim_{\mathcal{U}}a_i$ exists and it is a real number. We fix $i_0=j$ and for $n\in \N$ arbitrary, assuming that $i_{n-1}$ is defined, we fix $i_{n}\in (G_{i_{n-1}}\setminus \{i_{n-1}\})\cap \{i\in \varGamma:|a_i-a|<\frac{1}{n}\}$. It is readily seen that $(i_n)_{n\in \N}$ has the required properties.
\end{remark}


\section{Auxiliary Results}
\label{Sec-Aux}

In this section we present some elementary propositions necessary for Lemma \ref{Extension} and our first main result, Theorem \ref{ell_1Sum}. 

\begin{proposition}\label{RadialRetraction}There is a $2$-Lipschitz retraction from a Banach space $X$ to any closed ball $B[a,r]$.
\end{proposition}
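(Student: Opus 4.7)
The plan is to write down the explicit radial retraction and verify the $2$-Lipschitz bound by a case analysis. By translating and rescaling (the map $x \mapsto (x-a)/r$ is an isometric affine bijection up to a scalar), it suffices to treat the case of the unit ball $B_X$. So I would define
\[
\rho(x) = \begin{cases} x & \text{if } \|x\|\leq 1,\\ \dfrac{x}{\|x\|} & \text{if } \|x\|> 1,\end{cases}
\]
and then set the retraction onto $B[a,r]$ to be $x\mapsto a + r\,\rho\bigl((x-a)/r\bigr)$. Clearly $\rho$ fixes $B_X$ pointwise, so the composition fixes $B[a,r]$.

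Next I would check the Lipschitz constant of $\rho$ by splitting into three cases for arbitrary $x,y\in X$. If both $\|x\|,\|y\|\leq 1$, then $\rho(x)-\rho(y)=x-y$, nothing to do. If both $\|x\|,\|y\|\geq 1$, assume without loss of generality $\|x\|\geq \|y\|$ and use the identity
\[
\frac{x}{\|x\|}-\frac{y}{\|y\|} = \frac{x-y}{\|x\|} + \frac{(\|y\|-\|x\|)\,y}{\|x\|\,\|y\|},
\]
together with $\bigl|\|y\|-\|x\|\bigr|\leq \|x-y\|$ and $\|x\|\geq 1$, to conclude $\|\rho(x)-\rho(y)\|\leq 2\|x-y\|$. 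Finally, if $\|x\|\leq 1\leq \|y\|$, then
\[
\|\rho(x)-\rho(y)\| \leq \|x-y\| + \bigl\|y - y/\|y\|\bigr\| = \|x-y\| + \bigl(\|y\|-1\bigr) \leq \|x-y\| + \bigl(\|y\|-\|x\|\bigr) \leq 2\|x-y\|,
\]
using $\|x\|\leq 1$ in the middle step.

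There is no real obstacle here: the only mildly delicate point is the algebraic manipulation in the \emph{both outside} case, where one must pull out the factor $1/\|x\|\leq 1$ to avoid a worse constant. Once that bound is in hand, transferring back to $B[a,r]$ by the affine change of variables preserves the Lipschitz constant $2$, completing the proof.
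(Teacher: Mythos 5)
Your proposal is correct and takes essentially the same route as the paper: the map $x\mapsto a+r\,\rho\bigl((x-a)/r\bigr)$ you define is exactly the radial retraction $h$ written in the paper's proof, which the paper simply asserts to be $2$-Lipschitz as ``an elementary checking.'' Your three-case verification (both inside, both outside via the identity $\frac{x}{\|x\|}-\frac{y}{\|y\|}=\frac{x-y}{\|x\|}+\frac{(\|y\|-\|x\|)y}{\|x\|\,\|y\|}$, and the mixed case) correctly supplies the details the paper omits, and the reduction to the unit ball by the affine change of variables is sound since it preserves Lipschitz constants.
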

\begin{proof}
It is an elementary checking that $h:X\to B[a,r]$ given by the formula
\[ h(x) = \left\{ \begin{array}{cc}
  x & \text{ if } \|x-a\|\leq r\\
  a+r\frac{x-a}{\|x-a\|} & \text{ if } \|x-a\|>r  
\end{array} \right. \]
defines a $2-$Lipschitz retraction.
\end{proof}

\begin{proposition}\label{UmaBola}
Let $X$ be a Banach space, $0<r<1$ be a real number and $a\in B_X$ be such that $\|a\|>r$. Then there is an isomorphism
$T:\F(B_X)\oplus_1 \R\to \F(\{0\}\cup B[a,r])$ with distortion $\|T\|\|T^{-1}\|\leq \max\{\frac{1}{r},\frac{2}{\|a\|-r}\}$.
\end{proposition}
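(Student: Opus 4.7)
The plan is to construct $T$ explicitly from the affine map $x\mapsto a+rx$ and to control its inverse through duality with Lipschitz functions. Set $\Phi:\F(B_X)\to \F(\{0\}\cup B[a,r])$ to be the unique bounded linear extension of $\delta_x\mapsto \delta_{a+rx}-\delta_a$; this extension exists by the universal property of $\F(B_X)$ because the prescription sends $0$ to $0$ and is $r$-Lipschitz (indeed $\|(\delta_{a+rx}-\delta_a)-(\delta_{a+ry}-\delta_a)\|_{\F} = \|(a+rx)-(a+ry)\| = r\|x-y\|$), so $\|\Phi\|\le r$. Define $T:\F(B_X)\oplus_1 \R\to \F(\{0\}\cup B[a,r])$ by $T(\mu,t)=\Phi(\mu)+t\delta_a$. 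Since $\|\delta_a\|=\|a\|$, the triangle inequality gives
\[
\|T(\mu,t)\|\le r\|\mu\|+|t|\|a\|\le \max\{r,\|a\|\}(\|\mu\|+|t|) = \|a\|\,\|(\mu,t)\|_1,
\]
using $\|a\|>r$; hence $\|T\|\le \|a\|\le 1$.

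To bound $\|T^{-1}\|$, I would pass to the adjoint. A direct pairing computation yields
\[
T^*:\Lip(\{0\}\cup B[a,r]) \to \Lip(B_X)\oplus_\infty \R, \qquad T^*(f) = \bigl(x\mapsto f(a+rx)-f(a),\; f(a)\bigr).
\]
Injectivity of $T^*$ is immediate ($T^*(f)=0$ forces $f\equiv 0$). Surjectivity with a quantitative bound follows by explicitly solving $T^*(f)=(g,t)$ for arbitrary $(g,t)\in \Lip(B_X)\oplus_\infty\R$: set $f(0)=0$ and $f(y)=g((y-a)/r)+t$ for $y\in B[a,r]$, which clearly satisfies $T^*(f)=(g,t)$. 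The crucial step is the estimate $L(f)\le \max\{1/r,\,2/(\|a\|-r)\}\|(g,t)\|_\infty$. For $y,y'\in B[a,r]$, the contraction by $1/r$ gives $|f(y)-f(y')|\le (L(g)/r)\|y-y'\|$; for the interface pairs $(0,y)$, combining $|g((y-a)/r)|\le L(g)\|y-a\|/r\le L(g)$ (since $\|y-a\|\le r$ and $g(0)=0$) with $\|y\|\ge \|a\|-r$ yields
\[
\frac{|f(y)|}{\|y\|} \le \frac{L(g)+|t|}{\|a\|-r}\le \frac{2}{\|a\|-r}\max\{L(g),|t|\}.
\]
Taking the maximum of the two regimes gives the claim, so $\|T^{-1}\|=\|(T^*)^{-1}\|\le \max\{1/r,\,2/(\|a\|-r)\}$, and the distortion is $\|T\|\|T^{-1}\|\le \|a\|\max\{1/r,\,2/(\|a\|-r)\}\le \max\{1/r,\,2/(\|a\|-r)\}$.

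The only (mild) obstacle is the interface estimate: one must split the Lipschitz computation for $f$ into the internal part on $B[a,r]$, which contributes $1/r$ from the inverse dilation, and the cross-term between $0$ and $B[a,r]$, which contributes $2/(\|a\|-r)$ from the triangle inequality combined with the strictly positive separation of $0$ from the ball.
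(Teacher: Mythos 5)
Your proposal is correct, and the core computations coincide with the paper's: your adjoint $T^*$ is exactly the operator $T(f)(x)=(f(rx+a)-f(a),f(a))$ that the paper takes as its starting point, and your explicit solution of $T^*(f)=(g,t)$ is the paper's inverse $S$, with the same two-regime estimate giving $\max\{1/r,\,2/(\|a\|-r)\}$. The difference is the direction of the duality argument. The paper works entirely on the $\Lip$ side and must then check that its $T$ is pointwise-pointwise continuous on bounded sets, hence weak$^*$-weak$^*$ continuous and a dual operator, in order for the isomorphism to descend to the free spaces. You instead build the predual operator directly from the universal property of $\F(B_X)$ (using $\|\delta_u-\delta_v\|_{\F}=d(u,v)$ to get $\|\Phi\|\le r$) and transfer the inverse bound upward through the adjoint; this dispenses with the weak$^*$-continuity check, at the cost of invoking the standard duality facts that bijectivity of $T^*$ forces bijectivity of $T$ (surjectivity of $T^*$ gives that $T$ is bounded below, injectivity of $T^*$ gives dense range) and that then $\|T^{-1}\|=\|(T^*)^{-1}\|$. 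You use these implicitly when writing $\|T^{-1}\|=\|(T^*)^{-1}\|$; it would be worth one sentence to record them, or alternatively to note that surjectivity of $T$ is immediate since $\delta_y=T(\delta_{(y-a)/r},1)$ for $y\in B[a,r]$ and $\delta_0=0$, so the range contains all Dirac molecules and is dense. Your sharper bound $\|T\|\le\|a\|\le 1$ is a harmless refinement of the paper's $\|T\|\le 1$.
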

\begin{proof}
Consider the map $T:\Lip(\{0\}\cup B[a,r])\to \Lip(B_X)\oplus_{\infty} \R$ given by
\[T(f)(x)=(f(rx+a)-f(a),f(a)).\]

It is evident that $T$ is a well defined linear map and, since
\[\|T(f)\|\leq \max\{L(f\restriction_{B[a,r]}),(|f(a)|/\|a\|)\}\leq\max\{L(f\restriction_{B[a,r]}),\sup_{x\in B[a,r]}(|f(x)|/\|x\|)\}=\|f\|_{\Lipp}\] 
for every $f\in \Lip(\{0\}\cup B[a,r])$, $T$ is continuous with $\|T\|\leq 1$. 
Furthermore, $T$ is a bijection and the inverse map $S:\Lip(B_X)\oplus_\infty \R\to \Lip(\{0\}\cup B[a,r])$ 
can be explicitly described by the formula
\begin{displaymath}
S(g,\lambda)(x)=\left\{\begin{array}{ll}
g((x-a)/r)+\lambda& \text{ if }x\in B[a,r]\\
0&\text{ if } x=0\\
\end{array} \right.
\end{displaymath}
Note that, for every $(g,\lambda)\in \Lip(B_X)\oplus_{\infty} \R$ and for all $x,y\in B[a,r]$,
\[|S(g,\lambda)(x)-S(g,\lambda)(y)|\leq \frac{1}{r}\|(g,\lambda)\|\|x-y\|\]
and
\begin{align*}|S(g,\lambda)(x)-S(g,\lambda)(0)|&=\left|g((x-a)/r)+\lambda\right|\leq \|g\|_{\Lipp}+|\lambda|\leq 2\|(g,\lambda)\|\\
&=\frac{2}{\|a\|-r}\|(g,\lambda)\|(\|a\|-r)\leq \frac{2}{\|a\|-r}\|(g,\lambda)\|\|x-0\|.
\end{align*}
We deduce that $\|S\|\leq \max\{\frac{1}{r},\frac{2}{\|a\|-r}\}$. It is easily seen that $T$ is pointwise-pointwise-continuous on the bounded subsets of $\Lip(\{0\}\cup B[a,r])$ from where the thesis follows.\\
\end{proof}

\begin{proposition}\label{SeparadorDeBolas}
Let $(M,d)$ be a bounded metric space with a fixed distinguished point $0$. Let $\{B_i:i\in I\}$ be a family of nonempty subsets of $M\setminus \{0\}$ such that $d(B_i,B_j)>\delta>0$ whenever $i\neq j$. If $N= \bigcup_{i\in I}(\{0\}\cup B_i)$, then
\[\F(N)\sim \left(\bigoplus_{i\in I}\F(\{0\}\cup B_i)\right)_{\ell_1}.\]
\end{proposition}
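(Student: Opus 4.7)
The plan is to prove the analogous statement for $\Lipp_0$-spaces and then transfer it via adjoints. For each $i \in I$, the inclusion $\{0\} \cup B_i \hookrightarrow N$ induces a canonical isometric linear embedding $\iota_i : \F(\{0\} \cup B_i) \hookrightarrow \F(N)$. Gathering these, I would define the natural operator
\[T \colon \left(\bigoplus_{i \in I}\F(\{0\} \cup B_i)\right)_{\ell_1} \to \F(N), \qquad T\bigl((\mu_i)_i\bigr)= \sum_{i \in I}\iota_i(\mu_i),\]
which is well defined and contractive since each $\iota_i$ has norm at most $1$. The goal is then to prove that $T$ is an isomorphism.

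A direct computation on evaluation functionals $\delta_x$ identifies the adjoint
\[T^* \colon \Lip(N) \to \Bigl(\bigoplus_{i \in I}\Lip(\{0\} \cup B_i)\Bigr)_{\ell_\infty}\]
with the restriction operator $R(f) = \bigl(f\restriction_{\{0\}\cup B_i}\bigr)_i$, which clearly satisfies $\|R\| \le 1$. To produce a bounded inverse for $R$, given a family $(f_i)_i$ in the $\ell_\infty$-sum with $L = \sup_i \|f_i\|_{\Lipp}$, I would glue the $f_i$'s into a single function $f \colon N \to \R$ by setting $f\restriction_{\{0\}\cup B_i} = f_i$; this is unambiguous because the $B_i$'s are pairwise $\delta$-separated and each $f_i$ vanishes at $0$. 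Inside a single piece $\{0\}\cup B_i$ the slope of $f$ is bounded by $L$. For $x \in B_i$ and $y \in B_j$ with $i \ne j$, letting $D = \sup_{z \in N} d(z,0) < \infty$ (finite by the boundedness of $M$), the separation hypothesis yields
\[|f(x) - f(y)| \le |f_i(x)| + |f_j(y)| \le L\bigl(d(x,0) + d(y,0)\bigr) \le 2LD,\]
so $|f(x)-f(y)|/d(x,y) \le 2LD/\delta$. Hence $\|f\|_{\Lipp} \le L \max\{1, 2D/\delta\}$, which proves that $R$ is surjective with bounded inverse, and thus an isomorphism.

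Since $T^*$ is a Banach space isomorphism, $T$ must be an isomorphism as well, by a standard consequence of the closed-range theorem. The step that requires the most care is the gluing: both the positive separation $\delta$ between distinct $B_i$'s and the boundedness of $M$ are essential, since dropping either would allow the glued function $f$ to fail to be globally Lipschitz even for a uniformly bounded family $(f_i)_i$. Everything else is a routine duality argument.
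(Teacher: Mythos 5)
Your argument is correct, but it takes a different route from the paper: the paper's proof is a two-line reduction, observing that boundedness of $M$ gives $d(x,0)+d(y,0)\leq 2\,\mathrm{diam}(M)\leq n\delta\leq n\,d(x,y)$ for points in distinct pieces and then invoking \cite[Proposition 6]{CanKau}, whereas you give a self-contained proof from scratch. Your route is the natural direct one: define the summation operator $T$ on the $\ell_1$-sum of free spaces, identify $T^*$ with the restriction operator into the $\ell_\infty$-sum of $\Lip$-spaces, and show the latter is an isomorphism by the gluing estimate $\|f\|_{\Lipp}\leq L\max\{1,2D/\delta\}$, which uses exactly the two hypotheses (separation and boundedness) in the same way the paper's verification does. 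Two points in your write-up deserve emphasis and both are handled correctly: (i) you start from an operator $T$ defined between the preduals and deduce that $T$ is an isomorphism from $T^*$ being one (via the closed range theorem: $T^*$ surjective gives $T$ bounded below, $T^*$ injective gives dense range), which avoids the standard pitfall of building an isomorphism only at the dual level without checking weak$^*$-continuity; (ii) the gluing is well defined and two-sidedly inverts restriction because the $B_i$ are pairwise disjoint and every $f_i$ vanishes at $0$ (injectivity of the restriction map, which you leave implicit, is immediate since $N=\bigcup_i(\{0\}\cup B_i)$). What each approach buys: the paper's citation keeps the exposition short and defers the work to an existing quantitative result, while your argument is self-contained, makes the distortion explicit, and in effect reproves the cited proposition in the special case needed here.
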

\begin{proof}
Let $n\in \N$ be so that $n\delta\geq 2 \mathrm{diam}(M)$. Then, for every $x\in B_i$ and $y\in B_j$, if $i\neq j$, we have 
$d(x,0)+d(y,0)\leq 2\mathrm{diam}(M)\leq n\delta \leq n d(x,y)$. We are done by applying \cite[Proposition 6]{CanKau}.
\end{proof}


\section{On large $\ell_1$-sums of Lipschitz-free spaces}
\label{Sec-ell1}

The next lemma, which is crucial for Theorem \ref{ell_1Sum}, allows us to obtain an extension operator from a $r$-packing to the whole unit ball $B_X$. 

\begin{lemma}\label{Extension}
In a Banach space $X$, for some $s>0$, assume that $\mathcal{A}$ is a $s$-packing in the unit ball $B_X$, containing the origin.
For each $0<r<s$, there is an extension operator $E\in \mathrm{Ext}^{pt}(N,B_X)$, where $N=\bigcup_{a\in \mathcal{A}}B[a,r]$,
with $\|E\|\leq \frac{4s-2r+1}{s-r}$.
\end{lemma}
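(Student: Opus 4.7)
The plan is to construct $E$ explicitly as a sum of $f\circ h_a$'s weighted by a Lipschitz partition of unity adapted to the packing $\mathcal{A}$. First, the $s$-packing hypothesis yields $\|a-b\|\ge 2s$ for any two distinct $a,b\in\mathcal{A}$ (otherwise the midpoint $(a+b)/2$ would lie in both open balls $B(a,s)$ and $B(b,s)$), and consequently $d(B[a,r],B[b,r])\ge 2(s-r)>0$. For each $a\in\mathcal{A}$ I would introduce the $(s-r)^{-1}$-Lipschitz bump
\[
\psi_a(x):=\max\!\left\{0,\,1-\frac{d(x,B[a,r])}{s-r}\right\},
\]
which is identically $1$ on $B[a,r]$ and vanishes outside $B[a,s]$. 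Thanks to the separation, the sets $\{x\in X:\psi_a(x)>0\}$ are pairwise disjoint, so at every point at most one $\psi_a$ is nonzero.

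Writing $h_a:X\to B[a,r]$ for the $2$-Lipschitz radial retraction from Proposition \ref{RadialRetraction}, I would then define
\[
E(f)(x):=\sum_{a\in\mathcal{A}}\psi_a(x)\,f\bigl(h_a(x)\bigr),\qquad x\in B_X,
\]
which is really a single summand (or zero) for each $x$. Linearity in $f$, the equality $E(f)(0)=f(0)=0$ (since $0\in\mathcal{A}$, $\psi_0(0)=1$, and $h_0(0)=0$), and $E(f)\restriction_{B[a,r]}=f\restriction_{B[a,r]}$ (on $B[a,r]$ only the $a$-th term contributes and $h_a$ acts as the identity) are immediate, so once the Lipschitz continuity of $E(f)$ is established $E$ will be an extension operator from $\Lip(N)$ into $\Lip(B_X)$.

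For the norm estimate, I would first treat two points $x,y$ lying in the closure of a single zone $\{\psi_a>0\}$. Adding and subtracting $\psi_a(y)f(h_a(x))$ gives
\[
|E(f)(x)-E(f)(y)|\le |\psi_a(x)-\psi_a(y)|\,|f(h_a(x))|+\psi_a(y)\,|f(h_a(x))-f(h_a(y))|,
\]
and the three ingredients $|\psi_a(x)-\psi_a(y)|\le(s-r)^{-1}\|x-y\|$, $|f(h_a(x))|\le L(f)\|h_a(x)\|\le L(f)(\|a\|+r)\le L(f)(1+r)$, and $|f(h_a(x))-f(h_a(y))|\le 2L(f)\|x-y\|$ combine to a bound $CL(f)\|x-y\|$ with $C$ explicit in $s,r$ and easily dominated by $\frac{4s-2r+1}{s-r}$. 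The general case then reduces to the single-zone case by cutting the segment $[x,y]$ at its (finitely many) crossings with the boundary of some zone: at every such break point $E(f)=0$, so piecewise application of the single-zone bound together with the triangle inequality yields the same constant globally, the sub-lengths summing to $\|x-y\|$. Pointwise-pointwise continuity on bounded subsets of $\Lip(N)$ is finally transparent from the formula, since $E(f)(x)$ depends on $f$ only through the single value $f(h_a(x))$ for the unique $a$ with $\psi_a(x)>0$ (or not at all).

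The main obstacle is the careful bookkeeping needed to recover the precise constant $\frac{4s-2r+1}{s-r}$ stated in the lemma; the single-zone computation is routine, but the split-segment argument requires verifying that a bounded line segment in $B_X$ meets only finitely many zones (immediate because the zone centers are $2s$-separated) and that the sub-lengths telescope back to $\|x-y\|$ regardless of how many zones are crossed.
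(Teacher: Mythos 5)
Your construction is essentially the same as the paper's (a bump equal to $1$ on $B[a,r]$ and supported in $B[a,s]$, multiplied by $f\circ h_a$ with the radial retraction; indeed your truncated bump $\psi_a$ is a cleaner variant of the McShane extension $\gamma_a$ used there, and your single-zone computation is correct and even gives the slightly better constant $\tfrac{2s-r+1}{s-r}\le\tfrac{4s-2r+1}{s-r}$, since $\psi_a\le 1$ while $\gamma_a$ is only bounded by $1+\tfrac{s}{s-r}$ on $B[a,s]$). The genuine gap is in the reduction of the global Lipschitz estimate to the single-zone case: the claim that a segment in $B_X$ meets only finitely many zones, justified by the $2s$-separation of the centers, is false in infinite-dimensional spaces, because bounded sets there need not be totally bounded. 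Concretely, in $X=\ell_1$ take $s=\tfrac18$, $\epsilon_i=16^{-i-10}$ and $a_i=(\tfrac12+2^{-i-2})e_1+(\tfrac18-\epsilon_i)e_{i+1}$ for $i\ge 1$; then $\mathcal{A}=\{0\}\cup\{a_i:i\ge1\}$ is an $s$-packing in $B_{\ell_1}$ containing the origin (all balls lie in $B_{\ell_1}$ and the centers are pairwise at distance $\ge 2s$), yet the segment from $\tfrac12 e_1$ to $\tfrac34 e_1$ enters every zone, since $\|(\tfrac12+2^{-i-2})e_1-a_i\|_1=\tfrac18-\epsilon_i<s$ for all $i$. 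With infinitely many crossings your telescoping argument breaks down (there need not even be ``consecutive'' break points), and this is exactly the regime in which the lemma is used, namely non-separable $X$.

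The repair is short and is what the paper does in its Cases 2 and 3: given $x$ in the zone of $a_1$ and $y$ in the zone of $a_2\neq a_1$ (or outside all zones), let $z_1$ be the last point of $[x,y]$, ordered from $x$ to $y$, lying in $B[a_1,s]$, and $z_2$ the first point of $[z_1,y]$ lying in $B[a_2,s]$. Convexity of the balls gives $z_1\in S[a_1,s]$, $z_2\in S[a_2,s]$ and $\|x-z_1\|+\|z_2-y\|\le\|x-y\|$, while $E(f)(z_1)=E(f)(z_2)=0$: with your bump this is immediate, since $\psi_{a}$ vanishes on $S[a,s]$ and a point of $S[a,s]$ cannot lie in $B(b,s)$ for $b\neq a$ by the $2s$-separation of centers. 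Applying your same-ball estimate to the pairs $(x,z_1)$ and $(z_2,y)$ then yields the global bound with the same constant, using only the two zones of the endpoints and no finiteness of crossings. With this modification your proof is correct and matches the paper's.
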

\begin{proof}
Consider $M=\bigcup_{a\in \mathcal{A}}B[a,s]$. For each $a\in \mathcal{A}$, 
let $\varphi_a:B[a,r] \cup (B_X\setminus B[a,s])\to \R$ be given by
\begin{displaymath}
\varphi_a(x)=\left\{\begin{array}{ll}
1& \text{ if }x\in B[a,r]\\
0&\text{ otherwise}\\
\end{array} \right.
\end{displaymath}
For every $x\in B[a,r]$ and $y\in B_X\setminus B[a,s]$ we have $\|x-y\|\geq s-r$. Therefore, $\varphi_a$ 
is a Lipschitz function with Lipschitz number $L(\varphi_a)=\frac{1}{s-r}$. 
Let $\gamma_a:B_X\to \R$ be a McShane's extension of $\varphi_a$ to $B_X$, that is, 
\[\gamma_a(x)=\inf\{\varphi_a(y)+\|x-y\|/(s-r): y \in B[a,r]\cup(B_X\setminus B[a,s])\}.\]

For each $a\in \mathcal{A}$, let $h_a:B_X\to B[a,r]$ be the restriction of the radial retraction from Proposition \ref{RadialRetraction}. Given $f\in \Lip(N)$ we define a map $E(f):B_X\to \R$ by the formula 
\begin{displaymath}
E(f)(x)=\left\{\begin{array}{ll}
\gamma_a(x)f(h_a(x))& \text{ if } \|x-a\|\leq s\text{ for some }a\in \mathcal{A}\\
0&\text{ otherwise}\\
\end{array} \right.
\end{displaymath}

Note that $E(f)$ is well defined and $E(f)(x)= f(x)$ for all $x\in N$. To check that $E(f)$ is a Lipschitz function let $x,y\in B_X$, $x\neq y$, be arbitrary. We distinguish $3$ main cases:
\begin{case}[1]$x,y\in B[a,s]$.
\end{case}
\begin{align*}
\frac{|E(f)(x)-E(f)(y)|}{\|x-y\|}&=\frac{|\gamma_a(x)f(h_a(x))-\gamma_a(y)f(h_a(y))|}{\|x-y\|}\\
&\leq\frac{|\gamma_a(x)-\gamma_a(y)|}{\|x-y\|}|f(h_a(x))|+\frac{|f(h_a(y))-f(h_a(x))|}{\|x-y\|}|\gamma_a(y)|\\
&\leq L(\gamma_a)\sup_{z\in B[a,r]}|f(z)|+L(f\circ h_a) \sup_{z\in B[a,s]}|\gamma_a(z)|\\
&\leq \frac{\|f\|_{\Lipp}}{(s-r)}\sup_{z\in B[a,r]}\|z\|+ 2\|f\|_{\Lipp}\sup_{z\in B[a,s]}(1+\frac{\|z-a\|}{(s-r)})\\
&\leq\|f\|_{\Lipp}\left(\frac{4s-2r+1}{s-r}\right).
\end{align*}

\begin{case}[2]
$x\in B[a,s]$ and $y\in B_X\setminus M$.
\end{case}
Let $z\in S[a,s]$ the point in the line segment connecting $x$ and $y$. Since $E(f)(z)=E(f)(y)=0$, we have by the previous case
\begin{align*}
|E(f)(x)-E(f)(y)|&\leq |E(f)(x)-E(f)(z)|+|E(f)(z)-E(f)(y)|\\
&\leq\|f\|_{\Lipp}\left(\frac{4s-2r+1}{s-r}\right)\|x-z\|\leq \|f\|_{\Lipp}\left(\frac{4s-2r+1}{s-r}\right)\|x-y\|.
\end{align*}

\begin{case}[3] $x\in B[a_1,s]$ and $y\in B[a_2,s]$ with $a_1\neq a_2$.
\end{case}
Let $z_1\in S[a_1,s]$ and $z_2\in S[a_2,s]$
be the points in the line segment connecting $x$ and $y$. Since $E(f)(z_1)=E(f)(z_2)=0$, we have by Case (1),
\begin{align*}
|E(f)(x)-E(f)(y)|&\leq |E(f)(x)-E(f)(z_1)|+|E(f)(z_1)-E(f)(z_2)|+|E(f)(z_2)-E(f)(y)|\\
&\leq \|f\|_{\Lipp}\left(\frac{4s-2r+1}{s-r}\right)(\|x-z_1\|+\|z_2-y\|)\leq \|f\|_{\Lipp}\left(\frac{4s-2r+1}{s-r}\right)\|x-y\|.
\end{align*}

Hence $E(f)\in \Lip(B_X)$. It follows that $E:\Lip(N)\to \Lip(B_X)$ is a well defined map, easily seen to be linear with $\|E\|\leq \frac{4s-2r+1}{s-r}$. We check that $E$ is pointwise-pointwise-continuous. Indeed, let $(g_i)_{i\in \varGamma}$ be a bounded net in $\Lip(N)$ converging pointwise to a function $g\in \Lip(N)$. Given $x\in B_X$, if $x\in B_X\setminus M$, then $E(g_i)(x)= E(g)(x)=0$ for all $i\in \varGamma$. If 
$\|x-a\|\leq s$ for some $a\in \mathcal{A}$, then 
\[\lim_{i\to \infty}E(g_i)(x)=\lim_{i\to \infty} \gamma_a(x)g_i(h_a(x))=\gamma_a(x)g(h_a(x))=E(g)(x).\]
And we conclude that $E\in \mathrm{Ext}^{pt}(N,B_X)$ 

\end{proof}

We are now in position of proving our first main result. 

\begin{proof}[Proof of Theorem \ref{ell_1Sum}]Let $X$ be a non-separable Banach space of density $\kappa$ (the separable case follows from \cite[Theorem 3.1]{Kauf}). We construct a $1/4-$packing $\mathcal{A}\subset B_X$ of cardinality $\kappa$ in the following way. We let $x_0$ be an arbitrary point in $S_X$ and given $j<\kappa$ assume that a collection $\{x_i: i<j\}$ was obtained. Then, $Y=\overline{\Span}\{x_i: i<j\}$ is a proper subspace of $X$ and, by the Riesz Lemma, there is $x_j\in S_X$ such that $\|x_j-x_i\|\geq 1/2$ for each $i\leq j$. By induction, we obtain a $1/2$-dispersed collection $\mathcal{C}\subset S_X$ of cardinality $\kappa$. We are done by fixing $\mathcal{A}=\{0\}\cup \{x/2:x\in \mathcal{C}\}$.\\

We let $N=\bigcup_{a\in \mathcal{A}}B[a,1/8]$. Since $\F(B[0,1/8])\stackrel{1}{\sim}\F(B_X)$ and, by  Proposition \ref{UmaBola},  $\F(B[a,1/8])\stackrel{8}{\sim}\F(B_X)\oplus_1 \R$ for each $a\in \mathcal{A}\setminus \{0\}$, by applying Proposition \ref{SeparadorDeBolas} we obtain
\begin{align*}
\F(N)&\sim \left(\bigoplus_{a\in \mathcal{A}\setminus \{0\}}\F(\{0\}\cup B[a,1/8])\right)_{\ell_1} \oplus_1 \F(B[0,1/8])\\
     &\sim \left(\bigoplus_{a\in \mathcal{A}\setminus \{0\}}\F(B_X)\oplus_1 \R\right)_{\ell_1}\oplus_1 \F(B_X)
     \sim \left(\bigoplus_{a\in \mathcal{A}}\F(B_X)\right)_{\ell_1}\oplus_1 \ell_1(\kappa).
\end{align*}

Recalling that $\F(B_X)\oplus_1 \ell_1(\kappa)\sim \F(B_X)$ by \cite[Proposition 3]{HajNov} and $\F(B_X)\sim \F(X)$ due to \cite[Corollary 3.3]{Kauf}, we have
\begin{align*}
\F(N)\sim \left(\bigoplus_{a\in \mathcal{A}}\F(B_X)\right)_{\ell_1}\sim \left(\bigoplus_{a\in \mathcal{A}}\F(X)\right)_{\ell_1}.
\end{align*}

According to Lemma \ref{Extension}, there is an extension operator $E\in \mathrm{Ext}^{pt}(N,B_X)$. Then,
\[\left(\bigoplus_{a\in \mathcal{A}}\F(X)\right)_{\ell_1}\sim \F(N)\stackrel{c}{\hookrightarrow} \F(B_X)\sim \F(X).\]

Finally, since $\F(X)$ is obviously complemented in any $\ell_1$-sum of itself and 
\[\left(\bigoplus_{n\in \N}\left(\bigoplus_{a\in \mathcal{A}}\F(X)\right)_{\ell_1}\right)_{\ell_1} \sim \left(\bigoplus_{a\in \mathcal{A}}\F(X)\right)_{\ell_1},\] 
the conclusion follows from a standard application of Pe{\l}czy\'nski decomposition method.
\end{proof}

\begin{remark}\label{Cardinalell1} It follows from Theorem \ref{ell_1Sum} that if $X$
 is a Banach space of density $\kappa$, then for every cardinal $1\leq \kappa_0\leq \kappa$ holds
\begin{align*}
\F(X)&\sim \left(\bigoplus_\kappa\F(X)\right)_{\ell_1} \sim \left(\bigoplus_{\kappa_0\times \kappa}\F(X)\right)_{\ell_1}\sim \left(\bigoplus_{\kappa_0}\left(\bigoplus_\kappa\F(X)\right)_{\ell_1}\right)_{\ell_1}\sim \left(\bigoplus_{\kappa_0}\F(X)\right)_{\ell_1}.
\end{align*}
\end{remark}


\section{On large $\ell_{\infty}$-sums of Lipschitz function spaces}
\label{Sec-ellinfty}

The following proof was inspired by \cite[Proposition 1]{Johnson}, \cite[Lemma 10]{CanKau} and \cite[Lemma 1.3]{CCD}.

\begin{proof}[Proof of Theorem \ref{UltraLIP}]
Let $T:\Lip(M)\to \left(\bigoplus_{N\in \varGamma}\Lip(N)\right)_{\ell_{\infty}}$ be given by
\[T(f)=(f\restriction_N)_{N\in \varGamma}.\]
It is evident that $T$ is a well defined linear operator with $\|T\|\leq 1$.

On the other hand, for each $N\in \varGamma$ and $g\in \Lip(N)$ we define a map $S_N(g):W\to \R$ by
\begin{displaymath}
S_N(g)(x)= \left\{
\begin{array}{ll}
0 & \text{ if }x\not\in N;\\
g(x) & \text{ if }x\in N.
\end{array} \right.
\end{displaymath}
Let $\mathcal{U}$ be a fixed directed ultrafilter on $\varGamma$, see Remark \ref{Ultralimits}. For each $G=(g_N)_{N\in \varGamma}\in (\bigoplus_{N\in \varGamma}\Lip(N))_{\ell_\infty}$, denoting $\varOmega=\bigcup_{N\in \varGamma}N$, we define the map $G^{\mathcal{U}}:\varOmega\to \R$ by the formula
\[G^{\mathcal{U}}(x)=\lim_{\mathcal{U}}S_N(g_N)(x).\]

Since $\sup_{N\in \varGamma}|S_N(g_N)(x)|\leq \|G\|_{\ell_{\infty}}d(x,0)<\infty$, $G^{\mathcal{U}}(x)$ is a real number for each $x\in \varOmega$. Given $x,y \in \varOmega$, let $Z\in \varGamma$ such that $x,y \in Z$ and
let $(N_n)_{n \in \N}$ be a sequence as in Remark \ref{Ultralimits}, so that $Z\subset N_1\subset N_2 \subset N_3\subset \ldots$ and  
\[\lim_{\mathcal{U}}|S_N(g_N)(x)-S_N(g_N)(y)|= \lim_{n\to \infty}|S_{N_n}(g_{N_n})(x)-S_{N_n}(g_{N_n})(y)|.\]
We have
\begin{align*}
|G^{\mathcal{U}}(x)-G^{\mathcal{U}}(y)|&=\lim_{\mathcal{U}}|S_N(g_N)(x)-\lim_{\mathcal{U}}S_N(g_N)(y)|=\lim_{n\to \infty}|g_{N_n}(x)-g_{N_n}(y)|\\
&\leq \sup_{N\in \varGamma}\|(g_N)_{N\in \varGamma}\|_{\ell_{\infty}}d(x,y)=\|G\|_{\infty}d(x,y).
\end{align*}

We deduce that $G^{\mathcal{U}}$ is a Lipschitz function, and since it is clear that $G^{\mathcal{U}}(0)=0$, we have $G^{\mathcal{U}}\in \Lip(\varOmega)$. Recalling that $\varOmega$ is dense in $M$,  $G^{\mathcal{U}}$ can be uniquely extended to a function $S(G)\in \Lip(M)$.
In this way, we have a well-defined map $S:\left(\bigoplus_{N\in \varGamma}\Lip(N)\right)_{\ell_{\infty}}\to \Lip(M)$. 

We claim that $S$ is linear. Given $G=(g_N)_{N\in \varGamma},\ H=(h_N)_{N\in \varGamma}\in  \left(\bigoplus_{N\in \varGamma}\Lip(N)\right)_{\ell_{\infty}}$ and $\lambda \in \R$, let $x\in \varOmega$ be arbitrary. We pick $Z\in \varGamma$ such that $x\in Z$ and
let $(N_n)_{n \in \N}$ be a sequence as in Remark \ref{Ultralimits}, so that $Z\subset N_1\subset N_2 \subset N_3\subset \ldots$ and  
\small
\[\lim_{\mathcal{U}}(S_N(\lambda g_N+h_N)(x)-\lambda S_N(g_N)(x)-S_N(h_N)(x))= \lim_{n\to \infty}(S_{N_n}(\lambda g_{N_n}+h_{N_n})(x)-\lambda S_N(g_{N_n})(x)-S_N(h_{N_n})(x)).\]\normalsize
Since $S_{N_n}(\lambda g_{N_n}+h_{N_n})(x)-\lambda S_N(g_{N_n})(x)-S_N(h_{N_n})(x)=0$ for each $n \in \N$ and since $\varOmega$ is dense in $M$ we deduce that $S(\lambda G+H)=\lambda S(G)+S(H)$. Hence, $S$ is a bounded linear operator and follows from the construction that $\|S\|\leq 1$.

Finally, let $f\in \Lip(M)$ and $x\in \varOmega$ be arbitrary. Let $Z\in \varGamma$ be such that $x\in Z$ and $(N_n)_n$ be a sequence in $\varGamma$ as in Lemma \ref{Ultralimits}, such that $Z\subset N_1\subset N_2 \subset N_3\subset \ldots$ and
\[\lim_{\mathcal{U}}S_N(f\restriction_N)(x)=\lim_{n\to \infty}S_{N_n}(f\restriction_{N_n})(x).\]
We have
\begin{align*}
S(T(f))(x)=\lim_{n\to \infty}S_{N_n}(f\restriction_{N_n})(x)=\lim_{n\to \infty}f(x)=f(x)
\end{align*}
and deduce that $S\circ T=\Id_{\Lip(M)}$ is the identity operator. Hence $P=T\circ S$ is a projection of norm $1$ of 
$\left(\bigoplus_{N\in \varGamma}\Lip(N)\right)_{\ell_{\infty}}$ onto an isometric copy of $\Lip(M)$.

\end{proof}

The next theorem and its corollary were suggested to the authors by Prof. Marek C\'uth.

\begin{theorem}\label{UniformlyComplementedSequence}
Let $Y$ be an infinite-dimensional Banach space of density $\kappa$ and let $X$ be a Banach space admitting a sequence $(X_n)_n$ of uniformly complemented finite-dimensional subspaces of $X$. Suppose that, for some $\lambda>0$, every finite-dimensional subspace $Y_0$ of $Y$ is contained in a subspace $Y_1$ of $Y$ with $Y_1\stackrel{\lambda}{\sim}X_n$ for some $n \in \N$. Then
\[\Lip(Y)\stackrel{c}{\hookrightarrow}\left(\bigoplus_{\kappa}\Lip(X)\right)_{\ell_{\infty}}.\]
\end{theorem}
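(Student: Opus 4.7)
My plan is to realize $\Lip(Y)$ as a complemented subspace of an $\ell_\infty$-sum of $\Lip$-spaces of well-behaved finite-dimensional subspaces of $Y$ via Theorem \ref{UltraLIP}, and then to replace every summand by $\Lip(X)$ using the hypothesis, all the while keeping the indexing set of cardinality at most $\kappa$. Write $K=\sup_{n}\|P_n\|$, where $P_n\colon X\to X_n$ is a uniformly bounded projection, and let $\mathcal{F}$ be the collection of all finite-dimensional subspaces $V$ of $Y$ such that $V\stackrel{\lambda}{\sim}X_{n(V)}$ for some $n(V)\in\N$. The hypothesis makes $\mathcal{F}$ cofinal in the directed set of all finite-dimensional subspaces of $Y$, so in particular $\mathcal{F}$ itself is upward directed by inclusion.

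The cardinality of $\mathcal{F}$ may well exceed $\kappa$, so I would carve out a directed subfamily $\varGamma\subseteq\mathcal{F}$ of size $\leq\kappa$ whose union is dense in $Y$. Fix a norm-dense set $D\subseteq Y$ with $|D|=\kappa$ and, for every finite $F\subseteq D$, choose $V(F)\in\mathcal{F}$ containing $\Span(F)$. Starting from $\varGamma_0=\{V(F):F\subseteq D\text{ finite}\}$ (of size $\leq\kappa$), I would inductively let $\varGamma_{k+1}$ consist of $\varGamma_k$ together with a single choice $V(V_1,V_2)\in\mathcal{F}$ with $V_1+V_2\subseteq V(V_1,V_2)$ for each pair $V_1,V_2\in\varGamma_k$. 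Then $\varGamma=\bigcup_{k\in\N}\varGamma_k$ has cardinality at most $\kappa$, is directed by inclusion, every member contains $0$ (they are linear subspaces), and $D\subseteq\bigcup\varGamma$ so the union is dense in $Y$. Theorem \ref{UltraLIP} yields
\[\Lip(Y)\stackrel{c}{\hookrightarrow}\Bigl(\bigoplus_{V\in\varGamma}\Lip(V)\Bigr)_{\ell_\infty}.\]

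Next I would complement every $\Lip(V)$ inside $\Lip(X)$ with constants depending only on $\lambda$ and $K$. For $V\in\varGamma$, fix $\phi_V\colon V\to X_{n(V)}$ with $\|\phi_V\|\|\phi_V^{-1}\|\leq\lambda$ (renormalized so both factors are $\leq\sqrt{\lambda}$), let $i_V\colon V\to X$ be the composition of $\phi_V$ with the canonical inclusion $X_{n(V)}\hookrightarrow X$, and set
\[T_V(f)=f\circ\phi_V^{-1}\circ P_{n(V)}\in\Lip(X),\qquad S_V(g)=g\circ i_V\in\Lip(V),\]
for $f\in\Lip(V)$ and $g\in\Lip(X)$. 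Both are linear and vanish at $0$, and their norms are bounded by $K\sqrt{\lambda}$ and $\sqrt{\lambda}$ respectively. Since $P_{n(V)}\circ i_V=\phi_V$, a direct computation gives $S_V\circ T_V=\Id_{\Lip(V)}$, witnessing $\Lip(V)\stackrel{c}{\hookrightarrow}\Lip(X)$ with uniform constants.

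Assembling coordinate-wise, $\mathbf{T}((f_V)_V)=(T_V(f_V))_V$ and $\mathbf{S}((g_V)_V)=(S_V(g_V))_V$ define bounded operators between $\bigl(\bigoplus_{V\in\varGamma}\Lip(V)\bigr)_{\ell_\infty}$ and $\bigl(\bigoplus_{V\in\varGamma}\Lip(X)\bigr)_{\ell_\infty}$ with $\mathbf{S}\circ\mathbf{T}=\Id$; the uniform bounds from the previous step are essential here. Padding with zero summands then shows $\bigl(\bigoplus_{V\in\varGamma}\Lip(X)\bigr)_{\ell_\infty}\stackrel{c}{\hookrightarrow}\bigl(\bigoplus_{\kappa}\Lip(X)\bigr)_{\ell_\infty}$, and composing with the embedding from the second step completes the proof. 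I expect the main delicate point to be the bookkeeping that makes $\varGamma$ simultaneously directed, cofinal, and of cardinality at most $\kappa$; once this is established, the operator-theoretic steps are routine.
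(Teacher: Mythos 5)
Your proposal is correct and follows essentially the same route as the paper: build a directed family of cardinality at most $\kappa$ of finite-dimensional subspaces of $Y$, each $\lambda$-isomorphic to some $X_n$, whose union is dense (the paper closes under finite subsets of a dense set, you close under pairs, which is the same idea), apply Theorem \ref{UltraLIP}, and then use the uniform complementation of the $X_n$ in $X$ to pass to $\left(\bigoplus_{\kappa}\Lip(X)\right)_{\ell_\infty}$. Your explicit operators $T_V$, $S_V$ merely spell out the uniform complementation step that the paper leaves implicit.
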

\begin{proof}
Let $\varGamma\subset Y$ be a dense subset of cardinality $\kappa$. We define $\varGamma_0=\varGamma$ and for $0<n<\omega$ we let $\varGamma_{n+1}=\{j\subset \varGamma_{n}:j\text{ is finite and non-empty}\}$. Finally we put $\varGamma_{\omega}=\bigcup_{n<\omega}\varGamma_{n}$. 

For each $j\in \varGamma_0$, we fix a subspace $Y_j$ of $Y$ containing $\Span\{j\}$ and such that $Y_j\stackrel{\lambda}{\sim}X_{n_j}$ for some $n_j \in \N$. Given $n<\omega$ arbitrary, if $Y_i$ is defined for all $i\in \varGamma_n$, then for each $j\in \varGamma_{n+1}$ we fix $Y_j$ containing  $\bigoplus_{i\in j} Y_i$ and such that $Y_j\stackrel{\lambda}{\sim}X_{n_j}$ 
for some $n_j \in \N$. By induction we obtain the collection $\varOmega=\{Y_j:j\in \varGamma_{\omega}\}$. It is clear that $\varOmega$ has cardinality $\kappa$ and $\bigcup_{Z\in \varOmega} Z$ is dense in $Y$. We claim that $\varOmega$ is also directed by upward inclusion. Indeed, 
given $j_1,j_2\in \varGamma_{\omega}$ arbitrary, assume that $j_1\in \varGamma_n$ and $j_2\in \varGamma_m$ with $m\geq n$. By our construction, there exists  $j^{\prime}_1\in \varGamma_m$ such that $Y_{j_1} \subset Y_{j_1^{\prime}}$. Then $t=\{j^{\prime}_1,j_2\}\in \varGamma_{m+1}$ and
$Y_t$ contains $Y_{j_1^{\prime}}\oplus Y_{j_2}$ which, in turn, contains $Y_{j_1}$ and $Y_{j_2}$. This establishes our claim.

Finally, from Theorem \ref{UltraLIP}, from the construction of $\varOmega$ and the hypothesis that $(X_n)_n$ is a sequence of uniformly complemented subspaces of $X$, follows 
\[\Lip(Y)\stackrel{c}{\hookrightarrow}\left(\bigoplus_{j\in \varGamma_{\omega}}\Lip(Y_j)\right)_{\ell_{\infty}}\stackrel{\lambda}{\sim}\left(\bigoplus_{j\in \varGamma_{\omega}}\Lip(X_{n_j})\right)_{\ell_{\infty}}\stackrel{c}{\hookrightarrow}\left(\bigoplus_\kappa\Lip(X)\right)_{\ell_{\infty}}.\]
\end{proof}

From Theorem \ref{UniformlyComplementedSequence} the following result is immediate.

\begin{corollary}\label{Lpcomplementation}For some $1\leq p \leq \infty$, Let $X$ be a $\mathcal L_p$-space of density $\kappa$. If $p<\infty$, then
\[\Lip(X)\stackrel{c}{\hookrightarrow}\left(\bigoplus_{\kappa}\Lip(\ell_p)\right)_{\ell_{\infty}}.\]
If $p=\infty$, then 
\[\Lip(X)\stackrel{c}{\hookrightarrow}\left(\bigoplus_{\kappa}\Lip(c_0)\right)_{\ell_{\infty}}.\]
\end{corollary}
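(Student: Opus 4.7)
The plan is to apply Theorem \ref{UniformlyComplementedSequence} directly, with a suitable choice of the ambient space and finite-dimensional subspaces. In the theorem's notation, I would set $Y=X$ (the given $\mathcal{L}_p$-space of density $\kappa$) and take the ambient space to be $\ell_p$ when $p<\infty$ or $c_0$ when $p=\infty$. The uniformly complemented finite-dimensional subspaces $(X_n)_n$ will be the natural $n$-dimensional coordinate subspaces: $X_n=\Span\{e_1,\ldots,e_n\}\subset \ell_p$ (isometric to $\ell_p^n$) in the first case, and $X_n=\Span\{e_1,\ldots,e_n\}\subset c_0$ (isometric to $c_0^n=\ell_\infty^n$) in the second. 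In both cases these are $1$-complemented via the natural coordinate projections, so they are uniformly complemented with constant $1$.

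The hypothesis of Theorem \ref{UniformlyComplementedSequence} requiring that every finite-dimensional $Y_0\subset X$ sits inside a subspace $Y_1$ with $Y_1\stackrel{\lambda}{\sim}X_n$ for some $n$ is then exactly the defining property of a $\mathcal{L}_{p,\lambda}$-space, as recalled in Section \ref{Sec-Term} (see \cite[Definition II.5.2]{LinTza}): every finite-dimensional subspace of a $\mathcal{L}_p$-space is contained in some $Z\stackrel{\lambda}{\sim}\ell_p^n$ (respectively $\ell_\infty^n$ when $p=\infty$). Thus all the hypotheses are met with the chosen $(X_n)_n$.

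Applying Theorem \ref{UniformlyComplementedSequence} immediately yields
\[
\Lip(X)\stackrel{c}{\hookrightarrow}\left(\bigoplus_{\kappa}\Lip(\ell_p)\right)_{\ell_{\infty}}
\]
when $p<\infty$, and
\[
\Lip(X)\stackrel{c}{\hookrightarrow}\left(\bigoplus_{\kappa}\Lip(c_0)\right)_{\ell_{\infty}}
\]
when $p=\infty$, which is the desired conclusion.

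There is essentially no obstacle: all the real work has been pushed into Theorem \ref{UniformlyComplementedSequence}. The only small subtlety to keep in mind is the edge case $p=\infty$, where $\mathcal{L}_\infty$-spaces are locally modeled on $\ell_\infty^n$; since $\ell_\infty^n=c_0^n$ isometrically, the coordinate subspaces of $c_0$ provide a valid sequence of uniformly $1$-complemented finite-dimensional subspaces, so the same argument applies without change.
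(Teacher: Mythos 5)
Your proof is correct and matches the paper's approach: the paper states Corollary \ref{Lpcomplementation} as an immediate consequence of Theorem \ref{UniformlyComplementedSequence}, obtained exactly as you do by taking the coordinate subspaces $\ell_p^n\subset\ell_p$ (respectively $\ell_\infty^n\subset c_0$) as the uniformly $1$-complemented sequence and invoking the $\mathcal{L}_{p,\lambda}$ definition for the local hypothesis. Your write-up simply makes explicit the details the paper leaves implicit.
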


We are now ready to demonstrate our third main result.

\begin{proof}[Proof of Theorem \ref{Lpspaces}]
 Let $X$ be a non-separable $\mathcal{L}_p$ space of cardinality $\kappa$ (the separable case was established in \cite[Theorem 3.3]{CCD}). We assume that $1\leq p <\infty$. The case $p=\infty$ is similar. By arguing as in the proof of \cite[Theorem 3.3]{CCD}, we obtain 
that $\Lip(\ell_p)\stackrel{c}{\hookrightarrow} \Lip(X)$. From Theorem \ref{ell_1Sum} follows
\[\left(\bigoplus_{\kappa}\Lip(\ell_p)\right)_{\ell_\infty} \stackrel{c}{\hookrightarrow} \left(\bigoplus_{\kappa}\Lip(X)\right)_{\ell_\infty}\sim \Lip(X).\]

On the other hand, by Corollary \ref{Lpcomplementation}, we have
\[\Lip(X)\stackrel{c}{\hookrightarrow} \left(\bigoplus_{\kappa}\Lip(\ell_p)\right)_{\ell_{\infty}}.\]
An application of Pe{\l}czy\'nski decomposition method yields 
\[\Lip(X)\sim \left(\bigoplus_{\kappa}\Lip(\ell_p)\right)_{\ell_{\infty}}.\]

We are done by repeating the process for $X=\ell_p(\kappa)$.
\end{proof}

From our results and some ideas from \cite[Theorem 3.1]{CCD} we can derive one more theorem. We recall that a Banach space $X$ is said to contain 
$\ell_{\infty}^n$'s uniformly (or $\lambda$-uniformly), if there is $\lambda\geq 1$ such that, for each $n \in \N$, there is a subspace $Y$ of $X$ with $Y\stackrel{\lambda}{\sim} \ell_{\infty}^n$. It is well-known that $X$ contains $\ell_{\infty}^n$'s uniformly if an only if $X$ does not have finite cotype, see \cite[Theorem 14.1]{DHT}

\begin{theorem}Let $X$ be a Banach space of density $\kappa$ containing $\ell_{\infty}^n$'s uniformly. Then, for every Hausdorff compact space $K$ of weight $\kappa$, 
\[\Lip(C(K))\stackrel{c}{\hookrightarrow}\Lip(X).\]
\end{theorem}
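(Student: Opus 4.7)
The plan is to reduce the problem to applying Theorem \ref{UniformlyComplementedSequence} with $Y=c_0(\kappa)$ and then to collapse the resulting large $\ell_{\infty}$-sum back onto $\Lip(X)$ by dualising Theorem \ref{ell_1Sum}.

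First, I would extract from the hypothesis a sequence of uniformly complemented finite-dimensional subspaces of $X$. By assumption there is $\mu\geq 1$ and subspaces $X_n\stackrel{\mu}{\sim}\ell_\infty^n$ of $X$. Since $\ell_\infty^n$ is $1$-injective, an isomorphism $T_n\colon X_n\to \ell_\infty^n$ extends (coordinate by coordinate via Hahn--Banach) to $\tilde T_n\colon X\to \ell_\infty^n$ with $\|\tilde T_n\|=\|T_n\|$, so $P_n=T_n^{-1}\tilde T_n$ is a projection of $X$ onto $X_n$ with $\|P_n\|\leq \|T_n\|\|T_n^{-1}\|\leq \mu$. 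Hence $(X_n)_n$ is a sequence of uniformly complemented subspaces of $X$.

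Next, I would verify the second hypothesis of Theorem \ref{UniformlyComplementedSequence} for $Y=c_0(\kappa)$. Any finite-dimensional $Y_0\subset c_0(\kappa)$ is supported on countably many coordinates, hence lies in an isometric copy of $c_0$, which is a $\mathcal L_{\infty,1+\varepsilon}$-space. Therefore $Y_0$ sits in a subspace $Y_1\subset c_0(\kappa)$ that is $(1+\varepsilon)$-isomorphic to some $\ell_\infty^n$, and consequently $\mu(1+\varepsilon)$-isomorphic to $X_n$. With both hypotheses in place, Theorem \ref{UniformlyComplementedSequence} yields
\[\Lip(c_0(\kappa))\stackrel{c}{\hookrightarrow}\left(\bigoplus_{\kappa}\Lip(X)\right)_{\ell_\infty}.\]

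Finally, I would exploit the duality $\Lip(X)=\F(X)^{*}$ together with the standard identification $\bigl(\bigoplus_{\kappa}\F(X)\bigr)_{\ell_1}^{*}\cong \bigl(\bigoplus_{\kappa}\Lip(X)\bigr)_{\ell_\infty}$ and Theorem \ref{ell_1Sum} to deduce
\[\left(\bigoplus_{\kappa}\Lip(X)\right)_{\ell_\infty}\sim\left(\bigoplus_{\kappa}\F(X)\right)_{\ell_1}^{*}\sim \F(X)^{*}=\Lip(X).\]
Combining this with Corollary \ref{LipC(K)}, which gives $\Lip(C(K))\sim \Lip(c_0(\kappa))$ (note that $K$ is forced to be infinite since $\kappa\geq\aleph_0$), one obtains $\Lip(C(K))\stackrel{c}{\hookrightarrow}\Lip(X)$. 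The only genuinely non-routine step is the first: the hypothesis of containing $\ell_\infty^n$'s uniformly is a priori only a geometric statement about the presence of such subspaces, and it must be promoted to the existence of uniformly bounded projections onto them, which is precisely where the injectivity of $\ell_\infty^n$ enters.
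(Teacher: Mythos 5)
Your argument is correct, and it shares the paper's three key ingredients — promoting uniform containment of $\ell_\infty^n$'s to uniform complementation via Hahn--Banach, the ultrafilter-based $\ell_\infty$-sum machinery, and collapsing $\left(\bigoplus_{\kappa}\Lip(X)\right)_{\ell_\infty}\sim\Lip(X)$ by dualising Theorem \ref{ell_1Sum} — but the middle of the route differs. The paper first proves the separable embedding $\Lip(c_0)\stackrel{c}{\hookrightarrow}\left(\bigoplus_{\N}\Lip(\ell_\infty^n)\right)_{\ell_\infty}\stackrel{c}{\hookrightarrow}\left(\bigoplus_{\N}\Lip(X)\right)_{\ell_\infty}\sim\Lip(X)$, and then handles $C(K)$ by applying Corollary \ref{Lpcomplementation} directly to $C(K)$ (using only that $C(K)$ is an $\mathcal{L}_\infty$-space of density $\kappa$), obtaining $\Lip(C(K))\stackrel{c}{\hookrightarrow}\left(\bigoplus_{\kappa}\Lip(c_0)\right)_{\ell_\infty}\stackrel{c}{\hookrightarrow}\left(\bigoplus_{\kappa}\Lip(X)\right)_{\ell_\infty}\sim\Lip(X)$; in particular it never needs the isomorphism $\Lip(C(K))\sim\Lip(c_0(\kappa))$. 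You instead feed $Y=c_0(\kappa)$ into Theorem \ref{UniformlyComplementedSequence} — your verification of its local hypothesis (countable supports, $c_0$ being $\mathcal{L}_{\infty,1+\varepsilon}$, composing with the $\mu$-isomorphisms onto the $X_n$) is fine — and then invoke Corollary \ref{LipC(K)} to replace $\Lip(C(K))$ by $\Lip(c_0(\kappa))$. This is not circular, since Corollary \ref{LipC(K)} rests on Theorem \ref{Lpspaces}, which is proved independently of the present statement; the trade-off is that your route leans on the full classification (hence on an extra Pe{\l}czy\'nski decomposition argument), while the paper's version gets by with the lighter Corollary \ref{Lpcomplementation} plus a separable embedding. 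Your closing remark correctly isolates the Hahn--Banach complementation step as the only genuinely geometric input, which is exactly how the paper treats it.
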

\begin{proof}
Since $X$ contains $\ell_{\infty}^n$'s uniformly, by using Hahn-Banach theorem we may deduce that $\ell_\infty^n$'s are uniformly complemented in $X$. From Theorem \ref{UltraLIP} and Remark \ref{Cardinalell1} (or simply \cite[Lemma 1.3]{CCD} and \cite[Theorem 3.1]{Kauf}) we have
\[\Lip(c_0)\stackrel{c}{\hookrightarrow}\left(\bigoplus_{\N}\Lip(\ell_\infty^n)\right)_{\ell_{\infty}}\stackrel{c}{\hookrightarrow}\left(\bigoplus_{\N}\Lip(X)\right)_{\ell_{\infty}}\sim \Lip(X).\]

Let $K$ be an infinite Hausdorff compact space of weight $\kappa$. Since $C(K)$ is a $\mathcal{L}_\infty$-space \cite[p. 198-199]{LinTza}, from Corollary \ref{Lpcomplementation}, the previous relation and Theorem \ref{ell_1Sum} follows 
\[C(K)\stackrel{c}{\hookrightarrow} \left(\bigoplus_{\kappa}\Lip(c_0)\right)_{\ell_{\infty}} \stackrel{c}{\hookrightarrow} \left(\bigoplus_{\kappa}\Lip(X)\right)_{\ell_{\infty}} \sim \Lip(X).\]
\end{proof}

\section{Acknowledgments}

The authors wish to thank Prof. Marek C\'uth for valuable suggestions that improved the original manuscript. The first-named author was supported by Funda\c c\~ao de Amparo \`a Pesquisa do Estado de S\~ao Paulo - FAPESP No. 2016/25574-8. The second-named author was supported by Coordena\c c\~ao de Aperfei\c coamento de Pessoal de N\'ivel Superior - CAPES.

\bigskip

\bibliographystyle{plain}

\end{document}